\documentclass{amsart}

\usepackage{amsmath,amsthm,amssymb}

\usepackage[all]{xy}

\theoremstyle{plain}
\newtheorem{Theo}{Theorem}[section]
\newtheorem{Lemm}[Theo]{Lemma}
\newtheorem{Prop}[Theo]{Proposition}
\newtheorem{Cor}[Theo]{Corollary}

\theoremstyle{definition}

\theoremstyle{remark}

\DeclareMathOperator{\CP}{\mathbb{C}P}
\DeclareMathOperator{\uC}{\underline{\mathbb{C}}}
\DeclareMathOperator{\C}{\mathbb{C}}
\DeclareMathOperator{\Z}{\mathbb{Z}}
\DeclareMathOperator{\FH}{F _\bullet H^*}

\begin{document}
	\title[(Filtered) cohomological rigidity of Bott towers]{(Filtered) cohomological rigidity of Bott towers}
	\date{\today}
	\author[H. Ishida]{Hiroaki Ishida}
	\address{Department of Mathematics, Graduate School of Schience, Osaka City University, Sugimoto, Sumiyoshi-ku, Osaka 558-8585, Japan}
	\email{hiroaki.ishida86@gmail.com}
	\keywords{Bott manifold, Bott tower, cohomological rigidity, toric manifold}
	\subjclass[2000]{57R19, 57R20, 57S25, 14M25}
	\begin{abstract}
		A Bott tower is an iterated $\CP ^1$-bundle over a point, where each $\CP ^1$-bundle
		is the projectivization of a rank $2$ decomposable complex vector bundle. For a Bott tower, 
		the \emph{filtered cohomology} is naturally defined. 
		We show that isomorphism classes of Bott towers are distinguished by their filtered cohomology rings. 
		We even show that any filtered cohomology ring isomorphism between two Bott towers  
		is induced by an isomorphism of the Bott towers.
	\end{abstract}
	
	\maketitle	
	\section{Introduction}
		A \emph{Bott tower} of height $n$ is a sequence of $\CP^1$-bundles 
		\begin{equation}\label{eq:Bott tower}
			B_n\stackrel{\pi_n}\longrightarrow B_{n-1} \stackrel{\pi_{n-1}}\longrightarrow
			\dots \stackrel{\pi_2}\longrightarrow B_1 \stackrel{\pi_1}\longrightarrow
			B_0=\{\text{a point}\},
		\end{equation}
		where each fibration $\pi _i: B_i\to B_{i-1}$ for $i=1,\dots,n$ 
		is the projectivization of a rank $2$ decomposable complex vector bundle over $B_{i-1}$. 
		A decomposable complex vector bundle is the Whitney sum of line bundles.  
		Each $B_i$ is called a \emph{Bott manifold}. 
		As is known, a Bott manifold $B_n$ is a smooth projective toric variety. 
		We denote the Bott tower \eqref{eq:Bott tower} of height $n$ by 
		$B_\bullet =(\{ B_i\} _{i=0}^n , \{\pi _i\} _{i=1}^n )$.

		Two Bott towers $B_\bullet = (\{ B_i\} _{i=0}^n , \{\pi _i\} _{i=1}^n )$ and 
		$B'_\bullet =(\{ B'_i\} _{i=0}^n , \{\pi '_i\} _{i=1}^n )$ are isomorphic if there is 
		a collection of diffeomorphisms $\varphi _\bullet = \{ \varphi _i: B_i \to B'_i\} _{i=0}^n$ 
		such that the following diagram is commutative: 
			\begin{equation*}
				\xymatrix{
					B_n \ar[r]^{\pi _n} \ar[d]^{\varphi _n} & B_{n-1} \ar[r]^{\pi _{n-1}} \ar[d]^{\varphi _{n-1}}
					& \dots \ar[r]^{\pi _2} & B_1 \ar[r]^{\pi _1} \ar[d]^{\varphi _1} & B_0 \ar[d]^{\varphi _0}\\
					B'_n \ar[r]^{\pi '_n} & B'_{n-1} \ar[r]^{\pi '_{n-1}} 
					& \dots \ar[r]^{\pi '_2} & B'_1 \ar[r]^{\pi '_1} & B'_0 .
					}
			\end{equation*}

		Since $\pi _i: B_i \to B_{i-1}$ has a cross section, 
		the induced homomorphism $\pi _i^* : \mathrm{H}^*(B_{i-1}) \to \mathrm{H}^*(B_i)$ is injective for each $i$. 
		Therefore the cohomology ring $\mathrm{H}^*(B_{i-1})$ can be regarded as a subring of $\mathrm{H}^*(B_i)$ via $\pi _i^*$.  
		We define a filtered graded $\Z$-algebra $\FH (B_\bullet )$ by
		\begin{itemize}
			\item For $i \geq n$, $\mathrm{F}_i\mathrm{H}^*( B_\bullet) := \mathrm{H}^*(B_n)$.
			\item For $0 \leq j \leq n-1$, 
				$\mathrm{F}_j\mathrm{H}^*( B_\bullet ):= 
				\pi _n^*\circ \pi _{n-1}^* \circ \dots \circ \pi _{j+1}^*(\mathrm{H}^*(B_j))$.
		\end{itemize}
		We call $\FH (B_\bullet )$ the \emph{filtered cohomology ring} of the Bott tower $B_\bullet$.

		The following is our main result: 
		\begin{Theo}\label{theorem1.1}
			Isomorphism classes of Bott towers are distinguished by their filtered cohomology rings. 
			Moreover, any filtered cohomology ring isomorphism between two Bott towers are induced
			by an isomorphism between the Bott towers. 
		\end{Theo}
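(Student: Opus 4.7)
The plan is to prove the theorem by induction on the height $n$, building a Bott tower isomorphism one level at a time from the data of a given filtered cohomology ring isomorphism. Recall that $H^*(B_n;\Z) = \Z[x_1,\ldots,x_n]/(x_i^2 - \alpha_i x_i)$ with $\alpha_i = c_1(L_i) \in F_{i-1}^2$ and $B_i = P(L_i \oplus \uC) \to B_{i-1}$, and each $F_i$ is a free $F_{i-1}$-module with basis $\{1, x_i\}$. It suffices to show that any filtered ring isomorphism $\varphi\colon\FH(B_\bullet)\to\FH(B'_\bullet)$ is realised as $\varphi = \psi_\bullet^*$ for some Bott tower isomorphism $\psi_\bullet\colon B'_\bullet\to B_\bullet$; this implies both assertions of the theorem.

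First, I would extract the normal form of $\varphi$. Filtration compatibility and the isomorphism $F_i^2/F_{i-1}^2 \cong \Z$ force $\varphi(x_i) = \epsilon_i x'_i + \delta_i$ with $\epsilon_i \in \{\pm 1\}$ and $\delta_i \in F_{i-1}'^2$. Expanding $\varphi(x_i)^2 = \varphi(\alpha_i)\varphi(x_i)$ in the $F_{i-1}'$-module basis $\{1, x'_i\}$ of $F_i'$ yields two identities: from the $x'_i$-coefficient, $\varphi(\alpha_i) = \epsilon_i \alpha'_i + 2\delta_i$, which uniquely determines $\delta_i$ and enforces the integrality of $(\varphi(\alpha_i) - \epsilon_i\alpha'_i)/2$; and from the constant term, the quadratic constraint $\delta_i(\delta_i + \epsilon_i \alpha'_i) = 0$, which (as we shall see) matches the $c_2$-vanishing condition in the geometric step.

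For the induction, assume $\psi_{i-1}$ is built with $\psi_{i-1}^* = \varphi|_{F_{i-1}}$, and let $N$ be the line bundle on $B'_{i-1}$ with $c_1(N) = (\alpha'_i - \varphi(\alpha_i))/2$ (unique by simple connectivity). The rank-$2$ bundle $\psi_{i-1}^* V_i \otimes N$ is a direct sum of two line bundles, and a direct computation shows $c_1(\psi_{i-1}^* V_i \otimes N) = c_1(V'_i)$ (from the linear identity) and $c_2(\psi_{i-1}^* V_i \otimes N) = 0 = c_2(V'_i)$ (from the quadratic constraint). A Krull--Schmidt argument then identifies $\psi_{i-1}^* V_i \otimes N \cong V'_i$ as decomposable rank-$2$ bundles, and projectivising gives a $\CP^1$-bundle isomorphism that serves as $\psi_i$ when $\epsilon_i = +1$. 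For $\epsilon_i = -1$, postcompose with a fibrewise complex conjugation on $B_i$, which acts on cohomology by $x_i \mapsto -x_i + \alpha_i$, producing both the required sign on $x_i$ and the correct shift in $\delta_i$.

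The principal obstacle is the bundle identification step: identifying $\psi_{i-1}^* V_i \otimes N$ with $V'_i = L'_i \oplus \uC$ as a direct sum of line bundles. The $c_1$-match is straightforward, but the $c_2$-vanishing required by Krull--Schmidt to force a trivial summand is the delicate point --- and it is precisely the algebraic quadratic constraint on $\delta_i$ that guarantees it. A secondary subtlety is the sign $\epsilon_i = -1$, which cannot be realised holomorphically but becomes available through the smooth fibrewise conjugation, explaining why the theorem is stated in the diffeomorphism category rather than the biholomorphic one.
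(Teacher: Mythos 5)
Your overall strategy coincides with the paper's: induct on the height, put the filtered isomorphism in the normal form $\varphi(x_i)=\epsilon_i x_i'+\delta_i$ with $\varphi(\alpha_i)=\epsilon_i\alpha_i'+2\delta_i$ and $\delta_i(\delta_i+\epsilon_i\alpha_i')=0$ (this is exactly the computation in the paper's Theorem \ref{theorem4.1}), reduce the inductive step to an isomorphism of rank $2$ decomposable bundles with equal total Chern class, and absorb the sign $\epsilon_i=-1$ by an involution of the $\CP^1$-bundle. However, there is a genuine gap at the step you yourself flag as the ``principal obstacle'': the identification $\psi_{i-1}^*V_i\otimes N\cong V_i'$. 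You invoke ``a Krull--Schmidt argument'' to force a trivial summand from $c_1$- and $c_2$-matching, but Krull--Schmidt is both inapplicable and pointing in the wrong direction here. Uniqueness of line-bundle decompositions fails for precisely the bundles in question: the paper's Lemma \ref{lemma3.2} exhibits $\gamma^{x_j}\oplus\gamma^{-x_j+\alpha_j}\cong\uC\oplus\gamma^{\alpha_j}$, i.e.\ the same bundle with two genuinely different splittings, so no uniqueness-of-summands principle can produce the trivial subbundle. Moreover, $c_2(\xi\oplus\xi')=c_1(\xi)c_1(\xi')=0$ does not formally yield a nowhere-zero section of a rank $2$ bundle over a base of real dimension $>4$: the Euler class is only the primary obstruction, and higher obstructions in $H^{k}(B;\pi_{k-1}(S^3))$ are not controlled by Chern classes.

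What closes this gap in the paper is the content of Section \ref{section3}: Proposition \ref{proposition3.4} (hence Theorem \ref{theorem3.1}) shows that over a Bott manifold, $c_2(\xi\oplus\xi')=0$ does imply $\xi\oplus\xi'\cong\uC\oplus(\xi\otimes\xi')$, and its proof is not formal --- it uses the special structure of $\mathrm{H}^*(B_n)$ through the primitive vanishing pair lemma (Lemma \ref{pvp}, from \cite{CM}) together with an induction that constructs explicit nowhere-zero sections (Lemmas \ref{lemma3.2} and \ref{lemma3.3}). Your proposal assumes exactly this statement without proof, and it is the technical heart of the theorem. A secondary, fixable inaccuracy: fibrewise complex conjugation induces $x_i\mapsto -x_i$ (after the canonical identification of $P(\overline{V})$ with $P(V)$), not $x_i\mapsto -x_i+\alpha_i$; the involution realizing $x_i\mapsto -x_i+\alpha_i$ is the fibrewise orthogonal-complement map of Lemma \ref{lemma4.2}, which is how the paper pins down that the constructed bundle map induces exactly the prescribed algebra isomorphism rather than its twist by the nontrivial $\mathrm{H}^*(B_n)$-algebra automorphism.
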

		Our study is motivated by the so-called \emph{cohomological rigidity problem} for toric manifolds. 
		A \emph{toric manifold} is a smooth (complete) toric variety. Bott manifolds are examples of toric manifolds.
		The cohomological rigidity problem asks whether topological types or diffeomorphism types of toric manifolds 
		are distinguished by their cohomology rings or not. This problem is open and we do not know any counterexamples.

		This paper is organized as follows.	In Section \ref{section2}, we recall some preliminary facts about Bott manifolds. 
		In Section \ref{section3}, we see that 
		rank $2$ decomposable vector bundles over Bott manifolds are distinguished by their total Chern classes. 
		In Section \ref{section4}, we study $\CP ^1$-bundles over Bott manifolds by using the results 
		obtained in Section \ref{section3}. In Section \ref{section5}, we give a proof of Theorem \ref{theorem1.1}. 
		
		Throughout this paper, all cohomology groups are taken with $\Z$-coefficient.
		\bigskip \\
		{\bf Acknowledgement}.
		The author would like to thank professor Mikiya Masuda for stimulating discussion about
toric topology. 
	\section{Preliminaries}\label{section2}
		In this section, we recall some preliminary facts (see \cite[section 2]{CM} and \cite[section 2]{CMS} for more details). 
		For a complex vector bundle $\eta $ over a topological space, we denote its projectivization by $P(\eta )$. 
		
		\begin{Lemm}[Lemma 2.1, \cite{CMS}]\label{lemma2.1}
			Let $B$ be a smooth manifold, $\xi $ be a complex line bundle over $B$ and $\eta $ be a complex vector bundle over $B$.
			Then, the projectivizations $P(\eta )$ and $P(\xi \otimes \eta )$ are isomorphic as bundles. 
		\end{Lemm}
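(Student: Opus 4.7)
The plan is to construct an explicit bundle isomorphism $\Phi : P(\eta) \to P(\xi \otimes \eta)$ covering the identity on $B$, defined fiberwise in a canonical way.

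For each $b \in B$, picking any nonzero $s \in \xi_b$ determines a linear isomorphism $\eta_b \to \xi_b \otimes_\C \eta_b$, $v \mapsto s \otimes v$, and hence an isomorphism of projective spaces $P(\eta_b) \to P(\xi_b \otimes \eta_b)$. The key observation is that the \emph{induced projective map} is independent of the choice of $s$: replacing $s$ by $\lambda s$ for $\lambda \in \C^*$ rescales the linear map by $\lambda$, which acts trivially on the target projective space. So we obtain a well-defined fiberwise map $\Phi$.

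To check that $\Phi$ is smooth, I would argue locally. Over any open set $U \subset B$ where $\xi$ admits a nonvanishing section $s : U \to \xi|_U$, the formula $[v] \mapsto [s(b) \otimes v]$ is manifestly smooth. Since smoothness is local and the canonical map on fibers agrees with the formula given by any local trivializing section, $\Phi$ is smooth globally. The inverse is constructed by exactly the same procedure with $\xi$ replaced by the dual line bundle $\xi^*$ (using the natural isomorphism $\xi^* \otimes \xi \otimes \eta \cong \eta$); alternatively, $\Phi$ is a fiberwise isomorphism of smooth fiber bundles and thus automatically a bundle isomorphism.

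There is not really a significant obstacle here — the entire content of the lemma is the observation that scalar automorphisms of the line $\xi_b$ act trivially on projectivizations, which is what makes the fiberwise tensoring descend to a canonical, hence global, bundle map. The only thing to be careful about is formulating the construction without accidentally building in a choice of trivialization of $\xi$, which would only give an isomorphism after such a choice is made.
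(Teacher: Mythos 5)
Your argument is correct. Note that the paper itself gives no proof of this lemma --- it is quoted from \cite{CMS} --- and your canonical fiberwise construction ($[v]\mapsto[s\otimes v]$, well defined because rescaling $s$ acts trivially on the projectivization, with smoothness checked via local nonvanishing sections of $\xi$ and the inverse obtained by tensoring with $\xi^*$) is exactly the standard argument; it is equivalent to the usual proof via local trivializations, where tensoring with a line bundle changes the transition functions of $\eta$ only by scalars, which act trivially on the fibers of $P(\eta)$.
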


		Let $B_{i-1}$ be a Bott manifold and $\eta _i= \xi _i \oplus \xi _i'$ be the Whitney sum of complex line bundles 
		$\xi _i$ and $\xi _i'$ over $B_{i-1}$. Since $\xi _i^* \otimes \eta _i = \uC \oplus (\xi _i^* \otimes \xi _i')$, the projectivization $B_i =P(\eta _i)$ is isomorphic to $P (\uC \oplus (\xi _i^*\otimes \xi _i'))$
		as fiber bundles by Lemma \ref{lemma2.1}, where $\uC $ denotes the trivial line bundle over $B_{i-1}$ and $\xi _i^*$ denotes the dual line bundle of $\xi _i$. 
		Hence we can assume that $\eta _i$ is the Whitney sum of $\uC $ and a complex line bundle, denoted $\xi _i$ again, in the definition of Bott towers. 
		
		Let $B_\bullet =(\{ B_i\} _{i=0}^n , \{\pi _i\} _{i=1}^n )$ be a Bott tower of height $n$. 
		We suppose that each fibration $B_i \to B_{i-1}$ is the projectivization $P(\uC \oplus \xi _i)$. 
		Let $\gamma _i$ be the tautological line bundle over $P(\uC \oplus \xi _i) =B_i$. 
		By Borel-Hirzebruch formula, the cohomology ring $\mathrm{H}^*(B_i)$ viewed as an $\mathrm{H}^*(B_{i-1})$-algebra via $\pi _i^*$ is of the form
		\begin{equation*}\label{BorelHirzebruch}
			\mathrm{H}^*(B_i) \cong \mathrm{H}^*(B_{i-1})[X]/(X^2-c_1(\xi _i)X)
		\end{equation*}
		where $c_1(\xi _i)$ denotes the first Chern class of $\xi _i$ 
		and $X$ represents the first Chern class of the tautological line bundle $\gamma _i$.
		Using this formula inductively on $i$, we see that the cohomology ring $\mathrm{H} ^*(B_n)$ is generated by 
		\begin{equation*}
			x_i := \pi _n^*\circ \dots \circ \pi _{i+1}^*(c_1(\gamma _i)) 
		\end{equation*}
		for $i = 1,\dots ,n$. More precisely, $\mathrm{H} ^*(B_n)$ is of the form
		\begin{equation*}
			\mathrm{H} ^*(B_n) = \Z [x_1,\dots ,x_n]/(x_i^2 - \alpha _ix_i ; i=1,\dots ,n)
		\end{equation*}
		as rings, 
		where $\alpha _i := \pi _n^*\circ \dots \circ \pi _{i+1}^*(c_1(\xi _i))$ for all $i$. 
		
		With this understood, we may regard $\mathrm{H} ^*(B_k)$ as a subring of $\mathrm{H}^*(B_n)$ 
		generated by $x_j$'s with $j =1,\dots ,k$. Namely, we have 
		\begin{equation*}
			\mathrm{F}_k\mathrm{H}^*(B_\bullet ) = \Z [x_1,\dots ,x_k]/(x_j^2 - \alpha _jx_j ; j=1,\dots ,k). 
		\end{equation*}
		
		A \emph{vanishing pair} is an ordered pair $(z,\overline{z})$ of elements in $\mathrm{H}^2(B_n)$ such that $z\overline{z} = 0$. 
		We say that a vanishing pair is \emph{primitive} if both elements are primitive. The following lemma is very helpful for our purpose. 
		\begin{Lemm}[Lemma 2.3, \cite{CM}]\label{pvp}
			A primitive vanishing pair is of the form
			\begin{equation*}
				(ax_j+u, \pm (a(x_j-\alpha _j)-u))
			\end{equation*}
			for some $j$, where $a$ is a non-zero integer, $u$ is a linear combination of $x_i$'s with $i < j$, and $u(u+a\alpha _j)=0$.  
		\end{Lemm}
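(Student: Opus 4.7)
The plan is to study the shape of a primitive vanishing pair $(z, \bar{z})$ by filtering on the largest index appearing, exploiting that $H^*(B_j)$ is free of rank $2$ over $H^*(B_{j-1})$ with basis $\{1, x_j\}$. Let $j$ be the maximum index with the coefficient of $x_j$ nonzero in either $z$ or $\bar{z}$; both elements then lie in $H^2(B_j) \subset H^*(B_n)$, so the condition $z\bar{z} = 0$ may be checked in $H^*(B_j)$. Writing $z = ax_j + u$ and $\bar{z} = bx_j + v$ with $a, b \in \Z$ and $u, v$ linear combinations of the $x_i$ with $i < j$, the relation $x_j^2 = \alpha_j x_j$ gives
\begin{equation*}
z\bar{z} = (ab\alpha_j + av + bu)x_j + uv,
\end{equation*}
and the freeness over $H^*(B_{j-1})$ forces the two identities $uv = 0$ and $av + bu + ab\alpha_j = 0$ in $H^2(B_{j-1})$. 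The case $b = 0$ is ruled out immediately: it forces $v = 0$ by torsion-freeness (with $a \neq 0$), making $\bar{z}$ non-primitive; by symmetry $a \neq 0$ as well.

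The key step is to show $a = \pm b$ using primitivity. Set $d = \gcd(a, b) > 0$ and $a = d\alpha$, $b = d\beta$ with $\gcd(\alpha, \beta) = 1$; dividing the linear identity by $d$ yields
\begin{equation*}
\alpha v = -\beta (u + a\alpha_j)
\end{equation*}
in the free abelian group $H^2(B_{j-1})$. Reading this coefficient by coefficient against the basis $x_1, \ldots, x_{j-1}$ and using the coprimality of $\alpha$ and $\beta$, one finds that $\alpha$ divides every coefficient of $u + a\alpha_j$, hence every coefficient of $u$ (since $\alpha \mid a$). Combined with $\alpha \mid a$, this exhibits $\alpha$ as a common divisor of all coefficients of $z$, so primitivity of $z$ forces $\alpha = \pm 1$; symmetrically $\beta = \pm 1$, hence $a = \pm b$.

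Finally, in the two cases $b = \pm a$ the linear identity immediately gives $v = -u - a\alpha_j$ or $v = u + a\alpha_j$ respectively, so $\bar{z} = \pm \bigl( a(x_j - \alpha_j) - u \bigr)$; substituting back into $uv = 0$ yields $u(u + a\alpha_j) = 0$ in either case, as claimed. The only mildly delicate point is the divisibility step, where freeness of $H^2(B_{j-1})$ as a $\Z$-module is essential to turn the ``$\alpha$ divides $-\beta(u + a\alpha_j)$'' relation into honest integer divisibilities on each coefficient.
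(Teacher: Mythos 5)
Your proof is correct. Note that the paper itself does not prove this lemma but quotes it from \cite{CM}, so there is no in-paper argument to compare against; your route (reduce to the top index $j$, use that $\mathrm{H}^*(B_j)$ is free of rank $2$ over $\mathrm{H}^*(B_{j-1})$ with basis $\{1,x_j\}$ to split $z\overline{z}=0$ into the two identities $uv=0$ and $av+bu+ab\alpha_j=0$, then force $a=\pm b$ via the gcd/primitivity argument and finish by the two-case substitution giving $\overline{z}=\pm(a(x_j-\alpha_j)-u)$ and $u(u+a\alpha_j)=0$) is a complete, self-contained reconstruction of the standard argument behind the cited Lemma 2.3, and every step, including ruling out $a=0$ or $b=0$ by torsion-freeness and primitivity, checks out.
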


	\section{Rank $2$ decomposable vector bundles over Bott manifolds}\label{section3}
		For an element $\alpha \in \mathrm{H}^2(B_n)$, we denote by $\gamma ^\alpha $ the complex line bundle over a Bott manifold $B_n$ 
		whose first Chern class is $\alpha$. 
		The purpose of this section is to prove the following. 
		\begin{Theo}\label{theorem3.1}
			Rank $2$ decomposable vector bundles over a Bott manifold  
			are distinguished by their total Chern classes. 
		\end{Theo}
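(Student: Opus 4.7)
The plan is to reduce the theorem to an additive statement about vanishing pairs, then to induct on the height of the tower.

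Complex line bundles over $B_n$ are classified by their first Chern classes, so a rank $2$ decomposable bundle has the form $\gamma^a \oplus \gamma^b$ with $a, b \in \mathrm{H}^2(B_n)$, and its total Chern class records exactly $a+b$ and $ab$. Given another such bundle $\gamma^c \oplus \gamma^d$ with the same total Chern class, the identity
\begin{equation*}
(a-c)(b-c) = ab - c(a+b) + c^2 = cd - c(c+d) + c^2 = 0,
\end{equation*}
together with $(a-c)+(b-c) = d - c$, shows that after tensoring both bundles by $\gamma^{-c}$ the problem reduces to proving the following: whenever $p, q \in \mathrm{H}^2(B_n)$ satisfy $pq = 0$,
\begin{equation*}
\gamma^p \oplus \gamma^q \cong \uC \oplus \gamma^{p+q}.
\end{equation*}

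I would prove this reduced claim by induction on $n$. The case $n = 1$ is immediate, since rank $2$ complex vector bundles over $\CP^1 \cong S^2$ are classified by $c_1$. For the inductive step, the crucial tool is the \emph{basic isomorphism}
\begin{equation*}
\gamma^{x_n} \oplus \gamma^{\alpha_n - x_n} \cong \uC \oplus \gamma^{\alpha_n}
\end{equation*}
on $B_n = P(\uC \oplus \xi_n)$, obtained by smoothly splitting the tautological short exact sequence $0 \to \gamma_n \to \pi_n^*(\uC \oplus \xi_n) \to Q_n \to 0$ (where $Q_n = \gamma^{\alpha_n - x_n}$); twisting by an arbitrary line bundle yields a family of elementary isomorphisms. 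If both $p, q$ lie in $\mathrm{H}^2(B_{n-1}) \subset \mathrm{H}^2(B_n)$, the bundle $\gamma^p \oplus \gamma^q$ is a pullback from $B_{n-1}$ and the inductive hypothesis finishes. Otherwise, writing $p = m \tilde p$, $q = m'\tilde q$ with $\tilde p, \tilde q$ primitive (so $\tilde p \tilde q = 0$), Lemma~\ref{pvp} puts $(\tilde p, \tilde q)$ in the normal form $(s x_n + u, \varepsilon(s(x_n - \alpha_n) - u))$, with $u \in \mathrm{H}^2(B_{n-1})$ and $u(u + s\alpha_n) = 0$. In the fully primitive subcase $m = m' = |s| = 1$, combining the basic isomorphism (twisted by $\gamma^u$, and, for $\varepsilon = +1$, dualized before twisting) with the inductive hypothesis applied on $B_{n-1}$ to the pair $(u, u+s\alpha_n)$ or $(u, -u-s\alpha_n)$ yields the desired isomorphism.

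The main obstacle I expect is the case where $|s| > 1$ or the multiplicities $m, m'$ exceed $1$, which is not directly produced by the elementary twists above. A natural route is to reduce these cases by taking symmetric powers of the splitting $\gamma_n \oplus Q_n \cong \pi_n^*(\uC \oplus \xi_n)$ and identifying matching summands, or by a secondary induction on $|s|$ combined with iterated use of the elementary twists. Verifying that the appropriate cancellation of summands on $B_n$ can be carried out — where generic Whitney-type cancellation of vector bundles is unavailable in high dimension — is where the bulk of the technical work will lie.
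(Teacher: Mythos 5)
Your overall strategy coincides with the paper's: you reduce the theorem, exactly as in the proof of Theorem \ref{theorem3.1}, to the claim that $pq=0$ implies $\gamma^p\oplus\gamma^q\cong\uC\oplus\gamma^{p+q}$ (Proposition \ref{proposition3.4}), and you attack that claim with the tautological splitting $\gamma^{x_n}\oplus\gamma^{\alpha_n-x_n}\cong\uC\oplus\gamma^{\alpha_n}$ (Lemma \ref{lemma3.2}) together with the normal form of primitive vanishing pairs (Lemma \ref{pvp}) and an induction (the paper inducts on the largest index $j$ occurring in the two classes rather than on the height, which is essentially the same organization). The fully primitive subcase $m=m'=|s|=1$ that you work out is correct. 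But the cases you defer --- multiplicities $m,m'>1$ and coefficient $|s|>1$ --- constitute a genuine gap, not a technicality you can postpone: your induction on height must establish the statement for \emph{all} vanishing pairs on $B_n$ (you yourself invoke it at level $n-1$ for the possibly non-primitive pair $(u,-(u+s\alpha_n))$), and Lemma \ref{pvp} allows the coefficient of $x_n$ to be any non-zero integer even when the pair is primitive (e.g.\ $2x_2+x_1$ is primitive). So without an argument for these cases the induction does not close.

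The missing ingredient is precisely the paper's Lemma \ref{lemma3.3}: if $\gamma^\alpha\oplus\gamma^\beta\cong\uC\oplus\gamma^{\alpha+\beta}$, then $\gamma^{a\alpha}\oplus\gamma^{b\beta}\cong\uC\oplus\gamma^{a\alpha+b\beta}$ for all integers $a,b$. Its proof is elementary and requires no Whitney-type cancellation (your worry on that point is a red herring): the hypothesis yields a nowhere zero section $f=(f_\alpha,f_\beta)$ of $\gamma^\alpha\oplus\gamma^\beta$ whose two components have disjoint zero loci; the tensor powers $f_\alpha^{a}$ and $f_\beta^{b}$ (composed with an anti-$\C$-linear real bundle isomorphism $\gamma^{c}\to\gamma^{-c}$ when an exponent is negative) give a nowhere zero section of $\gamma^{a\alpha}\oplus\gamma^{b\beta}$, hence a trivial line subbundle, and the complementary line bundle is pinned down by its first Chern class. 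This single lemma absorbs both of your problem cases: arbitrary vanishing pairs reduce to primitive ones by rescaling, and the coefficient $s$ is handled because $\gamma^{sx_n}\oplus\gamma^{s(x_n-\alpha_n)}\cong\uC\oplus\gamma^{s(2x_n-\alpha_n)}$ follows from Lemma \ref{lemma3.2} combined with Lemma \ref{lemma3.3} (with $a=b=s$). This is the sharpened, section-by-section version of your ``symmetric powers of the splitting'' idea; with it, the rest of your inductive step goes through exactly as in the paper's proof of Proposition \ref{proposition3.4}, and the theorem follows by the tensoring argument you already gave.
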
		
		We prepare two lemmas.
		\begin{Lemm}\label{lemma3.2}
				$\gamma ^{x_j}\oplus \gamma ^{-x_j+ \alpha _j} \cong \uC \oplus \gamma ^{\alpha _j}$.
			\end{Lemm}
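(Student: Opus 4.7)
The plan is to produce the isomorphism directly from the tautological exact sequence on $B_j = P(\uC \oplus \xi _j)$, rather than by comparing total Chern classes (which would beg the question, since Theorem \ref{theorem3.1} is precisely what Lemma \ref{lemma3.2} is to be used for).

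First I would write down, on $B_j$, the tautological short exact sequence
\begin{equation*}
0 \longrightarrow \gamma _j \longrightarrow \pi _j^*(\uC \oplus \xi _j) \longrightarrow Q_j \longrightarrow 0,
\end{equation*}
where $Q_j$ is the quotient line bundle. Since the base is a smooth manifold, a Hermitian metric splits this sequence, giving a bundle isomorphism
\begin{equation*}
\gamma _j \oplus Q_j \;\cong\; \uC \oplus \pi _j^*(\xi _j).
\end{equation*}

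Next I would pull this isomorphism back to $B_n$ along $\pi := \pi _{j+1} \circ \dots \circ \pi _n$ and identify each of the four line bundles by its first Chern class. By definition $c_1(\pi ^*\gamma _j) = x_j$; by additivity of $c_1$ under direct sum applied to the displayed isomorphism, $c_1(\pi ^*Q_j) = \alpha _j - x_j$; the pullback of the trivial bundle is trivial; and $c_1(\pi ^*\pi _j^*\xi _j) = \alpha _j$. Since complex line bundles over a CW-complex are classified by their first Chern class in $\mathrm{H}^2$, the pullbacks on the two sides are respectively isomorphic to $\gamma ^{x_j} \oplus \gamma ^{-x_j + \alpha _j}$ and $\uC \oplus \gamma ^{\alpha _j}$, yielding the claim.

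The only delicate point is conceptual rather than computational: one must split the sequence and recognize each summand \emph{individually} via $c_1$, because the statement that decomposable rank $2$ bundles are determined by their total Chern class (Theorem \ref{theorem3.1}) is not yet available. Aside from this, the argument is a routine application of the Euler-type sequence on a projectivization together with the classification of complex line bundles by $\mathrm{H}^2$.
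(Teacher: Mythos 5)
Your proof is correct and is essentially the paper's own argument: the paper also realizes $\gamma^{x_j}$ as the (pulled-back) tautological subbundle of $\uC \oplus \gamma^{\alpha_j}$, takes the orthogonal complement with respect to a Hermitian metric, and identifies that complement as $\gamma^{-x_j+\alpha_j}$ via the classification of line bundles by $c_1$. Your phrasing via the tautological exact sequence and its metric splitting is just a repackaging of the same idea, and your remark about avoiding Theorem \ref{theorem3.1} matches the paper's logical order.
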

			\begin{proof}
				Since $\gamma ^{x_j}$ is the pull-back bundle of the tautological line bundle over $P(\uC \oplus \xi _j)=B_j$, 
				$\gamma ^{x_j}$ is a subbundle of $\uC \oplus \gamma ^{\alpha _j}$ where $\alpha _j = c_1(\xi _j)$. It follows that the first Chern class of the orthogonal complement of $\gamma ^{x_j}$ in $\uC \oplus \gamma ^{\alpha _j}$ (with an Hermitian metric) is 
				$ -x_j + \alpha _j$. Since the orthogonal complement is a complex line bundle
				and complex line bundles are classified by their first Chern classes, it must be isomorphic to $\gamma ^{-x_j +\alpha _j}$, proving the lemma. 
			\end{proof}
			\begin{Lemm}\label{lemma3.3}
				If $\gamma ^\alpha \oplus \gamma ^\beta \cong \uC \oplus \gamma ^{\alpha + \beta }$,  
				then $\gamma ^{a\alpha } \oplus \gamma ^{b\beta } \cong \uC \oplus \gamma ^{a\alpha + b\beta }$ for any integers $a$ and $b$.  
			\end{Lemm}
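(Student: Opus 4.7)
My plan is to reformulate the hypothesis in terms of sections and then build explicit nowhere-vanishing sections of the bundles appearing in the conclusion. The hypothesis $\gamma ^\alpha \oplus \gamma ^\beta \cong \uC \oplus \gamma ^{\alpha +\beta }$ is equivalent to saying that $\gamma ^\alpha \oplus \gamma ^\beta $ admits a nowhere-vanishing section: the subbundle $\uC $ on the right is precisely such a section, and conversely any nowhere-vanishing section $v$ yields a short exact sequence $0 \to \uC \to \gamma ^\alpha \oplus \gamma ^\beta \to Q \to 0$ that splits via a Hermitian metric, and the rank $1$ quotient $Q$ has first Chern class $\alpha +\beta $ so it must be $\gamma ^{\alpha +\beta }$.

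Write this nowhere-vanishing section as $v=(s,t)$ with $s\in \Gamma (\gamma ^\alpha )$ and $t\in \Gamma (\gamma ^\beta )$, so that $s(p)$ and $t(p)$ never simultaneously vanish. The main step is the case $a,b\geq 1$: I would consider the tensor-power section
\[
(s^{\otimes a},\, t^{\otimes b}) \in \Gamma (\gamma ^{a\alpha } \oplus \gamma ^{b\beta }).
\]
For line bundles, $s^{\otimes a}(p)=0$ iff $s(p)=0$, and likewise for $t$, so a common zero of $s^{\otimes a}$ and $t^{\otimes b}$ would force a common zero of $s$ and $t$, contradicting that $v$ is nowhere zero. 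Hence $\gamma ^{a\alpha }\oplus \gamma ^{b\beta }$ has a nowhere-vanishing section, and the same splitting argument (comparing first Chern classes on the quotient) yields $\gamma ^{a\alpha }\oplus \gamma ^{b\beta }\cong \uC \oplus \gamma ^{a\alpha +b\beta }$.

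To finish, the remaining cases reduce to this one. If $a=0$ or $b=0$ the statement is tautological. If $a,b\leq 0$, apply the positive case to the dual hypothesis $\gamma ^{-\alpha }\oplus \gamma ^{-\beta }\cong \uC \oplus \gamma ^{-\alpha -\beta }$ with exponents $|a|,|b|$, then dualize. For mixed signs, say $a>0$ and $b=-b'<0$, tensor the desired isomorphism $\gamma ^{a\alpha }\oplus \gamma ^{-b'\beta }\cong \uC \oplus \gamma ^{a\alpha -b'\beta }$ with $\gamma ^{b'\beta }$; after rearrangement this is the same as $\gamma ^{a\alpha }\oplus \gamma ^{b'\beta }\cong \uC \oplus \gamma ^{a\alpha +b'\beta }$, which falls under the non-negative case.

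I do not expect a serious obstacle here: the only point that requires any care is the very first reformulation, where one must invoke a Hermitian metric (or the fact that short exact sequences of smooth complex vector bundles split) to pass from a nowhere-vanishing section to an actual direct-sum decomposition, and to identify the rank $1$ complement via its first Chern class. Once that reformulation is in place, the tensor-power trick does all the work.
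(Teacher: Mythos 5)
Your proof is correct, and its engine is the same as the paper's: extract a nowhere-vanishing section $(s,t)$ of $\gamma^{\alpha}\oplus\gamma^{\beta}$ from the trivial subbundle, observe that the zero loci of $s$ and $t$ are disjoint, pass to the tensor powers $(s^{\otimes a}, t^{\otimes b})$ to get a nowhere-vanishing section of $\gamma^{a\alpha}\oplus\gamma^{b\beta}$, and then split off the trivial line subbundle and identify its orthogonal complement by its first Chern class. The only genuine divergence is how negative exponents are treated: the paper stays inside the section construction, using the conjugate-linear isomorphisms $\gamma^{c}\to\gamma^{-c}$ (isomorphisms of real $2$-plane bundles, anti-$\C$-linear on fibers) to transport $s^{\otimes|a|}$ and $t^{\otimes|b|}$ to sections of the negative powers, so that all sign patterns are handled by one uniform construction; you instead reduce the negative and mixed-sign cases formally to the positive case, by dualizing the hypothesis and by tensoring the desired isomorphism with a suitable line bundle (which is reversible, so the reduction is legitimate). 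Both routes are sound. Your version avoids anti-linear maps and keeps the geometric input confined to the case $a,b\geq 1$, at the cost of a short case analysis; and the line-bundle twisting you use for mixed signs is essentially the same manipulation the paper itself employs later, in the proof of Proposition \ref{proposition3.4}, when it factors $\gamma^{ax_j+u}\oplus\gamma^{a(x_j-\alpha_j)-u}$ as $\gamma^{ax_j}\otimes(\gamma^{u}\oplus\gamma^{-(u+a\alpha_j)})$.
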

			\begin{proof}
				There is a nowhere zero cross section $f$ of $\gamma ^\alpha \oplus \gamma ^\beta$ since $\gamma ^\alpha \oplus \gamma ^\beta $ contains a trivial complex line bundle as a subbundle by assumption. Let $f_\alpha $ and $f_\beta$ be the projections of $f$ on $\gamma ^\alpha$ and $\gamma ^\beta $, which are cross sections of $\gamma ^\alpha$ and $\gamma ^\beta $ respectively. Their zero loci do not intersect. If $a$ (respectively, $b$) is positive, we define $f_{a\alpha } := f_\alpha ^a = f_\alpha \otimes \dots \otimes f_\alpha$ (respectively, $f_{b\beta } := f_\beta ^b$) which is a cross section of $\gamma ^{a\alpha }$ (respectively, $\gamma ^{b\beta }$). 
				Note that there is a map $\varphi _{a\alpha } : \gamma ^{a\alpha } \to \gamma ^{-a\alpha }$ 
				(respectively, $\varphi _{b\beta} : \gamma ^{b\beta } \to \gamma ^{-b\beta}$) which is an isomorphism as 
				real $2$-plane bundles but anti-$\mathbb{C}$-linear on fibers.
				If $a$ (respectively, $b$) is negative, we define a cross section $f_{a\alpha }$ of $\gamma ^{a\alpha }$ 
				(respectively, $f_{b\beta }$ of $\gamma ^{b\beta }$) 
				to be the pull-back of $f_\alpha ^{-a}$ (respectively, $f_\beta ^{-b}$) by $\varphi _{a\alpha } : \gamma ^{a\alpha } \to \gamma ^{-a\alpha}$ (respectively, $\varphi _{b\beta} : \gamma ^{b\beta } \to \gamma ^{-b\beta}$). The pair $(f_{a\alpha }, f_{b\beta })$ determines a nowhere zero cross section of $\gamma ^{a\alpha }\oplus \gamma ^{b\beta}$. 
				Thus $\gamma ^{a\alpha } \oplus \gamma ^{b\beta }$ contains a trivial complex line bundle as a subbundle. 
				By computing the first Chern class of the orthogonal complement of the trivial subbundle in 
				$\gamma ^{a\alpha } \oplus \gamma ^{b\beta }$, we get the lemma. 
			\end{proof}
		\begin{Prop}\label{proposition3.4}
			Let $\xi $ and $\xi '$ be complex line bundles over $B_n$. Then, the second Chern class $c_2( \xi \oplus \xi ')$ is zero if and only if $\xi \oplus \xi'$ is isomorphic to $\uC \oplus (\xi \otimes \xi ')$. 
		\end{Prop}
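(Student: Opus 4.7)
The $\Leftarrow$ direction is immediate: since $\uC$ is trivial, $c_2(\uC \oplus (\xi \otimes \xi')) = 0$, so $c_2(\xi \oplus \xi') = 0$ too. For the converse, set $\alpha = c_1(\xi)$ and $\beta = c_1(\xi')$, so $\xi \cong \gamma^\alpha$, $\xi' \cong \gamma^\beta$, and $\alpha\beta = c_2(\xi \oplus \xi') = 0$. If $\alpha = 0$ or $\beta = 0$ the claim is trivial, so assume both are nonzero and write $\alpha = c\tilde\alpha$, $\beta = c'\tilde\beta$ with $\tilde\alpha, \tilde\beta$ primitive; since $\mathrm{H}^*(B_n)$ is torsion free, $\tilde\alpha\tilde\beta = 0$, i.e., $(\tilde\alpha, \tilde\beta)$ is a primitive vanishing pair. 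By Lemma \ref{lemma3.3}, it then suffices to establish the isomorphism $\gamma^{\tilde\alpha} \oplus \gamma^{\tilde\beta} \cong \uC \oplus \gamma^{\tilde\alpha + \tilde\beta}$ for the primitive pair.

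Lemma \ref{pvp} supplies the structure $(\tilde\alpha, \tilde\beta) = (ax_j + u,\; \epsilon(a(x_j - \alpha_j) - u))$ with $\epsilon = \pm 1$, $a$ a nonzero integer, $u \in \mathrm{H}^2(B_{j-1})$, and $u(u + a\alpha_j) = 0$. I would induct on the height $n$ of the Bott tower; the base $n = 0$ is trivial. For the inductive step, if $j < n$ then both $\tilde\alpha, \tilde\beta$ lie in the subring $\mathrm{H}^*(B_{n-1}) \subset \mathrm{H}^*(B_n)$, so the hypothesis applied to the truncated tower delivers the splitting over $B_{n-1}$, which pulls back to $B_n$. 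If $j = n$ and $\epsilon = -1$, I combine Lemma \ref{lemma3.2} with Lemma \ref{lemma3.3} (taking both scaling factors equal to $a$) to obtain $\gamma^{ax_n} \oplus \gamma^{-ax_n + a\alpha_n} \cong \uC \oplus \gamma^{a\alpha_n}$, then tensor both sides by $\gamma^u$, yielding $\gamma^{\tilde\alpha} \oplus \gamma^{\tilde\beta} \cong \gamma^u \oplus \gamma^{u + a\alpha_n}$. The interior pair $(u, u + a\alpha_n)$ is a vanishing pair in $\mathrm{H}^2(B_{n-1})$, so the inductive hypothesis turns it into $\uC \oplus \gamma^{2u + a\alpha_n} = \uC \oplus \gamma^{\tilde\alpha + \tilde\beta}$, finishing this sub-case.

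The main obstacle is the remaining sub-case $j = n$, $\epsilon = +1$, because $\tilde\alpha + \tilde\beta = 2ax_n - a\alpha_n$ still involves $x_n$ and a direct tensor-and-descend does not land in $\mathrm{H}^*(B_{n-1})$. My plan is to factor out the common tensor factor $\gamma^{ax_n}$: writing $\gamma^{\tilde\alpha} \oplus \gamma^{\tilde\beta} = \gamma^{ax_n} \otimes (\gamma^u \oplus \gamma^{-a\alpha_n - u})$, the interior pair $(u, -a\alpha_n - u)$ now lies in $\mathrm{H}^2(B_{n-1})$ with product $-u(u + a\alpha_n) = 0$, so by induction it is isomorphic to $\uC \oplus \gamma^{-a\alpha_n}$. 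Tensoring back gives $\gamma^{ax_n} \oplus \gamma^{ax_n - a\alpha_n}$, and one final application of Lemma \ref{lemma3.2} together with Lemma \ref{lemma3.3} (now with scaling factors $a$ and $-a$) identifies this with $\uC \oplus \gamma^{2ax_n - a\alpha_n} = \uC \oplus \gamma^{\tilde\alpha + \tilde\beta}$. With both sub-cases handled, Lemma \ref{lemma3.3} promotes the primitive result back to $(\alpha, \beta) = (c\tilde\alpha, c'\tilde\beta)$ and completes the proof.
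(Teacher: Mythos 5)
Your proof is correct and follows essentially the same route as the paper: reduce to a primitive vanishing pair via Lemma \ref{lemma3.3}, invoke Lemma \ref{pvp} for its form, and in the key step factor out $\gamma^{ax_j}$, apply the inductive hypothesis to the interior pair of lower-index classes, and finish with Lemmas \ref{lemma3.2} and \ref{lemma3.3}. The only cosmetic differences are that you induct on the height of the tower instead of the minimal index $j$, and you treat the sign $\epsilon=-1$ as a separate sub-case where the paper absorbs it at the outset via Lemma \ref{lemma3.3}.
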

		\begin{proof}
			The \lq\lq{if}'' part is clear. So we show the \lq\lq{only if}'' part. 
			
			We assume that $c_2(\xi \oplus \xi ' )= 0$. Then $c_1(\xi )c_1(\xi ') =0$. 
			Note that if either $c_1(\xi )=0$ or $c_1(\xi ')=0$, then $\xi $ or $\xi '$ is trivial and Proposition \ref{proposition3.4} holds. Thus, by lemma \ref{lemma3.3} and the above note, it suffices to show that $\xi \oplus \xi ' \cong \C \oplus (\xi \otimes \xi ')$ only when the vanishing pair $(c_1(\xi _i),c_1(\xi _i'))$ is primitive.

			By Lemma \ref{pvp} and \ref{lemma3.3}, we may assume
			\begin{equation*}
				(c_1(\xi ),c_1(\xi '))=(ax_j+u, a(x_j-\alpha _j)-u)
			\end{equation*}
			for some $j$, where $a$ is a non-zero integer, $u$ is a linear combination of $x_i$'s with $i < j$, and $u(u+a\alpha _j) = 0$. 
			Let $j$ be the minimal integer such that $c_1(\xi )$ and $c_1(\xi ')$ can be written as linear combinations of $x_k$'s with $k \leq j$. 
			We will show that $\xi  \oplus \xi ' \cong \uC \oplus (\xi \otimes \xi ')$ by induction on $j$. 
			If $j=1$, then $(c_1(\xi ),c_1(\xi ')) = \pm ( x_1, x_1)$ and we are done by Lemma \ref{lemma3.2} and \ref{lemma3.3}. Now we assume that $\gamma ^\alpha \oplus \gamma ^{\alpha '} \cong \uC \oplus (\gamma ^\alpha \otimes \gamma ^{\alpha '})$ for any vanishing pair $(\alpha , \alpha ')$ such that $\alpha$ and $\alpha '$ can be written as linear combinations of $x_k$'s with $k < j$. 
			Then, 
			\begin{equation*}
				\begin{split}
					\gamma ^{ax_j+u}\oplus \gamma ^{a(x_j-\alpha _j)-u} & \cong \gamma ^{ax_j} \otimes (\gamma ^u \oplus \gamma ^{-(u+a\alpha _j)})  \\ 
					& \cong \gamma ^{ax_j} \otimes (\uC \oplus \gamma ^{-a\alpha _j}) \text{ (by hypothesis of induction)} \\
					& \cong \gamma ^{ax_j} \oplus \gamma ^{a(x_j - \alpha _j)}  \\
					& \cong \uC \oplus \gamma ^{a(2x_j -\alpha _j)} \text{ (by Lemma \ref{lemma3.2} and \ref{lemma3.3})}.
				\end{split}
			\end{equation*}
			This completes the induction step, proving the proposition.
		\end{proof}
		\begin{proof}[Proof of Theorem \ref{theorem3.1}]
			Let $\alpha $, $\beta $, $\alpha '$ and $\beta '$ be elements in $\mathrm{H}^2(B_n)$ such that 
			\begin{equation*}
				c(\gamma ^\alpha \oplus \gamma ^\beta ) = c(\gamma ^{\alpha '} \oplus \gamma ^{\beta '}), 
			\end{equation*}
			that is, $(1+\alpha )(1+\beta )= (1+\alpha ')(1+\beta ')$. Then, we have
			\begin{equation*}
				\beta - \alpha = (\alpha ' -\alpha ) + (\beta ' - \alpha ) 
			\hspace*{5mm}\text{ and }\hspace*{5mm}
				(\alpha ' -\alpha )(\beta ' -\alpha )=0.
			\end{equation*}
			Therefore, it follows from Proposition \ref{proposition3.4} that 
			\begin{equation*}
				\uC \oplus \gamma ^{\beta - \alpha } \cong \gamma ^{\alpha ' -\alpha } \oplus \gamma ^{\beta ' -\alpha }.
			\end{equation*}
			By tensoring the both sides above with $\gamma ^\alpha $, 
			we obtain the theorem. 
		\end{proof}
	\section{$\CP ^1$-bundles over Bott manifolds}\label{section4}
		In this section, we study $\CP ^1$-bundles over Bott manifolds. 
		\begin{Theo}\label{theorem4.1}
			Let $B_n$ be a Bott manifold. Let $\gamma ^\alpha $ and $\gamma ^\beta $ be complex line bundles over $B_n$. 
			If $\mathrm{H}^*(P(\uC \oplus \gamma ^\alpha ))$ and $\mathrm{H}^*(P(\uC \oplus \gamma ^\beta ))$ are isomorphic as $\mathrm{H}^*(B_n)$-algebras,
			then $P(\uC \oplus \gamma ^\alpha )$ and $P(\uC \oplus \gamma ^\beta )$ are isomorphic as bundles. 
		\end{Theo}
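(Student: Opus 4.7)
The plan is to extract from the given $\mathrm{H}^*(B_n)$-algebra isomorphism a complex line bundle $\mathcal{L}$ over $B_n$ satisfying $\mathcal{L}\otimes(\uC\oplus\gamma^\alpha) \cong \uC\oplus\gamma^\beta$; Lemma \ref{lemma2.1} will then convert this into a bundle isomorphism of the projectivizations. The bridge from ring data to bundle data is Theorem \ref{theorem3.1}, which upgrades an equality of total Chern classes to an isomorphism of rank-$2$ decomposable bundles.

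First, by Borel--Hirzebruch,
\begin{equation*}
\mathrm{H}^*(P(\uC\oplus\gamma^\alpha)) = \mathrm{H}^*(B_n)[X]/(X^2-\alpha X), \quad \mathrm{H}^*(P(\uC\oplus\gamma^\beta)) = \mathrm{H}^*(B_n)[Y]/(Y^2-\beta Y),
\end{equation*}
with $\deg X = \deg Y = 2$. A graded $\mathrm{H}^*(B_n)$-algebra isomorphism $\phi$ between them is determined by $\phi(X) = aY + u$ with $a \in \Z$ and $u \in \mathrm{H}^2(B_n)$. Since each ring is free of rank $2$ over $\mathrm{H}^*(B_n)$ on $\{1, X\}$ respectively $\{1, Y\}$, the induced map on the rank-one quotient modulo $\mathrm{H}^*(B_n)$ sends the class of $X$ to $a$ times the class of $Y$; requiring it to be an isomorphism forces $a = \pm 1$. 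Expanding the relation $\phi(X)^2 = \alpha\,\phi(X)$ and comparing the components in $\mathrm{H}^*(B_n) \cdot 1$ and $\mathrm{H}^*(B_n) \cdot Y$ then yields two identities tying $\alpha$, $\beta$, and $u$.

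From these relations I would define $\mu \in \mathrm{H}^2(B_n)$---explicitly $\mu = -u$ if $a = 1$ and $\mu = u - \alpha$ if $a = -1$---so that
\begin{equation*}
2\mu + \alpha = \beta \quad\text{and}\quad \mu(\mu + \alpha) = 0
\end{equation*}
both hold. These two equalities say exactly that $c_1$ and $c_2$ of $\gamma^\mu \oplus \gamma^{\mu + \alpha}$ and of $\uC \oplus \gamma^\beta$ agree. By Theorem \ref{theorem3.1}, $\gamma^\mu \oplus \gamma^{\mu+\alpha} \cong \uC \oplus \gamma^\beta$. Since $\gamma^\mu \oplus \gamma^{\mu+\alpha} = \gamma^\mu \otimes (\uC \oplus \gamma^\alpha)$, Lemma \ref{lemma2.1} delivers the desired bundle isomorphism $P(\uC\oplus\gamma^\alpha)\cong P(\uC\oplus\gamma^\beta)$.

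The main obstacle, and the one step I would carry out most carefully, is the middle verification: one must check that the constraints imposed by $\phi(X)^2 = \alpha\phi(X)$ supply not only the first-Chern-class identity $2\mu + \alpha = \beta$ but also the second-Chern-class vanishing $\mu(\mu+\alpha) = 0$ that activates the hypothesis of Proposition \ref{proposition3.4}. Once that vanishing is secured, Section \ref{section3} does all the remaining work and the proof reduces to a line-bundle twist via Lemma \ref{lemma2.1}.
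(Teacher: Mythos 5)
Your proposal is correct and follows essentially the same route as the paper: extract from the $\mathrm{H}^*(B_n)$-algebra isomorphism the relations $2\mu+\alpha=\beta$ and $\mu(\mu+\alpha)=0$, invoke Theorem \ref{theorem3.1} to get $\gamma^\mu\oplus\gamma^{\mu+\alpha}\cong\uC\oplus\gamma^\beta$, and finish with the twist via Lemma \ref{lemma2.1}. The only cosmetic differences are that you absorb the sign $a=-1$ into the choice of $\mu$ (where the paper uses complex conjugation) and you justify $a=\pm1$ explicitly, which the paper leaves implicit.
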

		\begin{proof}
			By Borel-Hirzebruch formula, $\mathrm{H}^*(P(\uC \oplus \gamma ^\alpha ))$ and 
			$\mathrm{H}^*(P(\uC \oplus \gamma ^\beta ))$ are of the forms 
			\begin{equation*}
				\mathrm{H}^*(P(\uC \oplus \gamma ^\alpha )) = \mathrm{H}^*(B_k)[X]/(X^2-\alpha X)
			\end{equation*}
			and 
			\begin{equation*}
				\mathrm{H}^*(P(\uC \oplus \gamma ^\beta )) = \mathrm{H}^*(B_k)[Y]/(Y^2-\beta Y).
			\end{equation*}
			Let $\Phi : \mathrm{H}^*(P(\uC \oplus \gamma ^\beta )) \to \mathrm{H}^*(P(\uC \oplus \gamma ^\alpha ))$ be an $\mathrm{H}^*(B_n)$-algebra isomorphism. We write $\Phi (Y) =: sX+ \alpha '$, $s=\pm 1$, $\alpha ' \in \mathrm{H}^2(B_k)$. Since $\Phi (Y^2 -\beta Y)=0$ and 
			$X^2=\alpha X$, we have
			\begin{equation*}
				\begin{split}
					0 &= (sX+\alpha ')^2 - \beta (sX+\alpha ') \\
					&=(\alpha + 2s\alpha ' - s\beta )X + \alpha '(\alpha '-\beta ).
				\end{split}
			\end{equation*}
			Therefore $\alpha = s(-2\alpha '+ \beta )$ and $\alpha '(\alpha '-\beta ) =0$. Thus we obtain 
			\begin{equation*}
				\begin{split}
					P(\uC \oplus \gamma ^\alpha ) &= P(\uC \oplus \gamma ^{s(-2\alpha '+\beta )})\\
					&\cong P(\uC \oplus \gamma ^{-2\alpha '+\beta } ) \text{ (by taking complex conjugation when $s=-1$)}\\
					&\cong P(\gamma ^{\alpha '} \oplus \gamma ^{-\alpha '+ \beta }) \text{ (by Lemma \ref{lemma2.1})}\\
					&\cong P(\uC \oplus \gamma ^\beta ) \text{ (by Theorem \ref{theorem3.1})}.
				\end{split}
			\end{equation*}
			This proves the theorem.
		\end{proof}
		\begin{Lemm}\label{lemma4.2}
			The group of $\mathrm{H}^*(B_n)$-algebra automorphisms of $\mathrm{H}^*(P(\uC \oplus \gamma ^\alpha ))$ is of order $2$ and
			generated by the automorphism induced by the bundle map $p : P(\uC \oplus \gamma ^\alpha ) \to P(\uC \oplus \gamma ^\alpha )$ 
			which assigns a line $\ell $ in $\uC \oplus \gamma ^\alpha$ (with an Hermitian metric) to its orthogonal complement $\ell ^\perp$.
		\end{Lemm}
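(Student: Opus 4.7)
The plan is to use the Borel--Hirzebruch presentation $\mathrm{H}^*(P(\uC \oplus \gamma ^\alpha )) = \mathrm{H}^*(B_n)[X]/(X^2-\alpha X)$ to classify the $\mathrm{H}^*(B_n)$-algebra automorphisms explicitly, and then to check that the unique non-identity candidate is realized by $p^*$.

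First, since this cohomology is a free $\mathrm{H}^*(B_n)$-module with basis $\{1,X\}$, any $\mathrm{H}^*(B_n)$-algebra endomorphism $\Phi$ is determined by $\Phi(X)$, which by grading lies in degree $2$; thus $\Phi(X) = sX + \alpha'$ with $s \in \mathbb{Z}$ and $\alpha' \in \mathrm{H}^2(B_n)$. The matrix of $\Phi$ in the basis $\{1,X\}$ is $\bigl(\begin{smallmatrix}1 & \alpha' \\ 0 & s\end{smallmatrix}\bigr)$, so invertibility of $\Phi$ forces $s = \pm 1$.

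Next I impose the relation $\Phi(X)^2 = \alpha\,\Phi(X)$. Expanding $(sX+\alpha')^2 = \alpha(sX+\alpha')$ and reducing via $X^2 = \alpha X$, I obtain a degree-$\leq 1$ polynomial in $X$ which must vanish in the free module, so each coefficient vanishes separately:
\begin{equation*}
(s^2-s)\alpha + 2s\alpha' = 0 \qquad\text{and}\qquad \alpha'(\alpha'-\alpha)=0.
\end{equation*}
Since $\mathrm{H}^2(B_n)$ is the free abelian group on $x_1,\dots,x_n$, it has no $2$-torsion. When $s=1$ the first equation forces $\alpha'=0$, giving $\Phi = \id$. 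When $s=-1$ it forces $\alpha'=\alpha$, and then the second equation is automatic, giving $\Phi(X) = \alpha - X$. Hence there are at most two $\mathrm{H}^*(B_n)$-algebra automorphisms.

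Finally I identify the non-identity candidate with $p^*$. The tautological line bundle $\gamma$ over $P(\uC \oplus \gamma^\alpha)$ sits inside the pulled-back bundle $\uC\oplus\gamma^\alpha$, and its orthogonal complement $\gamma^\perp$ satisfies $\gamma \oplus \gamma^\perp \cong \uC \oplus \gamma^\alpha$, so $c_1(\gamma^\perp) = \alpha - X$. By construction $p$ sends a line $\ell$ to $\ell^\perp$, hence $p^*\gamma = \gamma^\perp$ and $p^*(X) = \alpha - X$. Since $p^2 = \id$, $p^*$ is a genuine involution distinct from the identity, so it generates the group of order $2$. The only real obstacle is showing that the two algebraic equations leave no room beyond $\id$ and $X \mapsto \alpha-X$; this relies on the torsion-freeness of $\mathrm{H}^2(B_n)$, without which $(s-1)\alpha + 2\alpha' = 0$ could admit many further solutions.
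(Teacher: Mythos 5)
Your proposal is correct and follows essentially the same route as the paper: the paper also derives $\Phi(X)=X$ or $-X+\alpha$ from the Borel--Hirzebruch presentation (citing the computation in Theorem \ref{theorem4.1}) and then identifies $p^*(X)=-X+\alpha$ via $p^*\gamma^X\oplus\gamma^X\cong\pi^*(\uC\oplus\gamma^\alpha)$. Your extra care about why $s=\pm1$ and why $2\alpha'=0$ forces $\alpha'=0$ (torsion-freeness of $\mathrm{H}^2(B_n)$) just spells out steps the paper leaves implicit.
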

		\begin{proof}
		Let $\Phi : \mathrm{H}^*(P(\uC \oplus \gamma ^\alpha )) \to \mathrm{H}^*(P(\uC \oplus \gamma ^\alpha ))$ be an $\mathrm{H}^*(B_n)$-algebra automorphism. 
		The same argument as in the proof of Theorem \ref{theorem4.1} shows that $\Phi (X) = X$ or $-X +\alpha$. 
		Thus the group of $\mathrm{H}^*(B_n)$-algebra automorphisms of $\mathrm{H}^*(P(\uC \oplus \gamma ^\alpha ))$ is of order $2$. 
		We have to show that $p^*(X) = -X+\alpha $. 
		Note that the total space of the tautological line bundle $\gamma ^X$ over $P(\uC \oplus \gamma ^\alpha )$ is the set
		\begin{equation*}
			\{ (\ell , v) \in P(\uC \oplus \gamma ^\alpha )\times E(\uC \oplus \gamma ^\alpha ); \ell \ni v\}
		\end{equation*}
		and the total space of the pull-back $p^*\gamma ^X$ of $\gamma ^X$ by $p$ is given by  
		\begin{equation*}
			\{ (\ell , v) \in P(\uC \oplus \gamma ^\alpha )\times E(\uC \oplus \gamma ^\alpha ); \ell ^\perp \ni v\} ,
		\end{equation*}
		so $p^*\gamma ^X  \oplus \gamma ^X  \cong \pi ^*(\uC \oplus \gamma ^\alpha )$, where $\pi : P(\uC \oplus \gamma ^\alpha ) \to B_n$ is the projection. 
		Therefore we obtain $p^*(X) = -X+\alpha $. 
		\end{proof}
		The following corollary follows from Theorem \ref{theorem4.1} and Lemma \ref{lemma4.2}.
		\begin{Cor}\label{corollary4.3}
			Let $B_n$ be a Bott manifold. Let $\eta $ and $\eta '$ be 
			rank $2$ decomposable vector bundles over $B_n$. Then, their projectivizations $P(\eta )$ and $P(\eta ')$ are isomorphic as bundles
			if and only if
			their cohomology rings are isomorphic as $\mathrm{H}^*(B_n)$-algebras. Moreover, any $\mathrm{H}^*(B_n)$-algebra isomorphism between $\mathrm{H}^*(P(\eta ))$ and $\mathrm{H}^*(P(\eta '))$ is induced by a bundle isomorphism. 
		\end{Cor}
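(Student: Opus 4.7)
The plan is to reduce to the standard form and then combine Theorem~\ref{theorem4.1} with Lemma~\ref{lemma4.2}. First, by Lemma~\ref{lemma2.1}, any rank $2$ decomposable vector bundle $\eta=\xi\oplus\xi'$ over $B_n$ has projectivization isomorphic, as a bundle, to $P(\uC\oplus(\xi^*\otimes\xi'))$. So without loss of generality I may assume $\eta=\uC\oplus\gamma^\alpha$ and $\eta'=\uC\oplus\gamma^\beta$ for some $\alpha,\beta\in\mathrm{H}^2(B_n)$, fixing once and for all bundle isomorphisms that realize these reductions. Under this reduction, the cohomology rings have the explicit Borel--Hirzebruch form used in Section~\ref{section4}.

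The bundle-isomorphism $\Rightarrow$ algebra-isomorphism direction is clear: pullback of a bundle isomorphism is an $\mathrm{H}^*(B_n)$-algebra isomorphism. The converse is precisely Theorem~\ref{theorem4.1}, applied in the reduced form. This establishes the first assertion of the corollary.

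For the ``moreover'' part, let $\Phi:\mathrm{H}^*(P(\uC\oplus\gamma^\beta))\to\mathrm{H}^*(P(\uC\oplus\gamma^\alpha))$ be any $\mathrm{H}^*(B_n)$-algebra isomorphism. The proof of Theorem~\ref{theorem4.1} actually produces an explicit bundle isomorphism $f:P(\uC\oplus\gamma^\alpha)\to P(\uC\oplus\gamma^\beta)$ as a concrete chain: complex conjugation if needed (when $s=-1$), the Lemma~\ref{lemma2.1} isomorphism twisting by $\gamma^{\alpha'}$, and finally the bundle isomorphism coming from Theorem~\ref{theorem3.1} (whose hypothesis $\alpha'(\alpha'-\beta)=0$ is exactly what the algebraic analysis of $\Phi$ supplies). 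The induced map $f^*$ is then some $\mathrm{H}^*(B_n)$-algebra isomorphism between the same two rings as $\Phi$.

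Now consider $\Phi\circ(f^*)^{-1}$, which is an $\mathrm{H}^*(B_n)$-algebra automorphism of $\mathrm{H}^*(P(\uC\oplus\gamma^\alpha))$. By Lemma~\ref{lemma4.2}, this automorphism is either the identity or the involution $p^*$ induced by the orthogonal-complement map $p$. In the first case, $\Phi=f^*$, so $\Phi$ is induced by $f$; in the second case, $\Phi=p^*\circ f^*=(f\circ p)^*$, so $\Phi$ is induced by the bundle isomorphism $f\circ p$. Either way, $\Phi$ is realized by a genuine bundle isomorphism, completing the proof. The main subtlety, rather than any obstacle, is keeping careful track of pullback directions when composing $f$ with $p$; once that bookkeeping is done, the result is immediate from the two inputs Theorem~\ref{theorem4.1} and Lemma~\ref{lemma4.2}.
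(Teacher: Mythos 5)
Your proof is correct and follows exactly the route the paper intends: the paper gives no written proof beyond the remark that the corollary ``follows from Theorem \ref{theorem4.1} and Lemma \ref{lemma4.2}'', and your argument (reduce to $\uC\oplus\gamma^\alpha$ via Lemma \ref{lemma2.1}, get some bundle isomorphism $f$ from Theorem \ref{theorem4.1}, then correct by the order-two automorphism group of Lemma \ref{lemma4.2} so that either $f$ or $f\circ p$ induces $\Phi$) is precisely that combination spelled out.
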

	\section{Proof of Theorem \ref{theorem1.1}}\label{section5}
	Let $B_\bullet = (\{ B_i\} _{i=0}^n , \{\pi _i\} _{i=1}^n )$ and $B'_\bullet =(\{ B'_i\} _{i=0}^n , \{\pi '_i\} _{i=1}^n )$ 
	be Bott towers of height $n$. Let $\Phi _\bullet : \FH (B_\bullet ')  \to \FH (B_\bullet )$ be an isomorphism. 
	We will show Theorem 1.1 by induction on height. Assume Theorem \ref{theorem1.1} holds for Bott towers of height $k$. 
	For $\Phi _k : \mathrm{F}_k\mathrm{H}^* (B_\bullet ')=\mathrm{H}^*(B_k') \to \mathrm{H}^*(B_k) = \mathrm{F}_k\mathrm{H}^*( B_\bullet )$, 
	there is a diffeomorphism $\varphi _k : B_k \to B_k '$ such that $\varphi _k^* = \Phi _k$ by the hypothesis of induction. Then, the pull-back bundle $\varphi _k^*B_{k+1}' \to B_k$ is isomorphic to $B_{k+1} \to B_k$. In fact, there is a commutative diagram 
	\begin{equation*}
		\xymatrix{
			\mathrm{H}^*(B_{k+1}) & & \mathrm{H}^*(B_{k+1}') \ar[ll]_{\Phi _{k+1}} \ar[ld]_{\widetilde{\varphi _k}^*} \\
			 & \mathrm{H}^*(\varphi _k^*B_{k+1}') & \\
			\mathrm{H}^*(B_k) \ar[uu]^{\pi _{k+1}^*} \ar[ru]^{(\varphi _k^*\pi _{k+1}')^*} & & 
			\mathrm{H}^*(B_k') \ar[ll]^{\varphi _k^* = \Phi _k} \ar[uu]_{\pi _{k+1}'^*}
		}
	\end{equation*}
	where $\widetilde{\varphi _k}^*$  is the homomorphism induced from the bundle map 
	$\widetilde{\varphi _k} : \varphi _k^*B_{k+1}' \to B_{k+1}'$ and $(\varphi _k^*\pi _{k+1}')^*$ 
	is the homomorphism induced from the projection 
	$\varphi ^*\pi _{k+1}' : \varphi _k^*B_{k+1}' \to B_k$. 
	Since $\widetilde{\varphi _k}$ is a diffeomorphism, the induced homomorphism $\widetilde{\varphi _k}^*$ is a ring isomorphism. 
	The composition $\Phi _{k+1}\circ (\widetilde{\varphi _k}^*)^{-1} : \mathrm{H}^*(\varphi _k^*B_{k+1}') \to \mathrm{H}^*(B_{k+1})$
	is an $\mathrm{H}^*(B_k)$-algebra isomorphism. By Corollary \ref{corollary4.3}, there is a bundle isomorphism $b : B_{k+1} \to \varphi _k^*B_{k+1}'$ such that
	$b^* = \Phi _{k+1}\circ (\widetilde{\varphi _k}^*)^{-1}$. 
	
	By the construction of $b$, the bundle map $\varphi _{k+1}:= \widetilde{\varphi _k} \circ b$ satisfies $\varphi _{k+1}^* = \Phi _{k+1}$.   
	\qed
	
\end{document}